\newcommand{\labbel}{\label}
\newtheorem*{theorem*}{Theorem}
\newtheorem*{proposition*}{Proposition}
\newtheorem*{corollary*}{Corollary}
\theoremstyle{definition}
\theoremstyle{remark}
\newcommand{\m}{\mathfrak}
\begin{document}

\title{Initial $\lambda$-compactness in linearly ordered spaces}

\author{Paolo Lipparini} 
\address{Dipartimento di Matematica\\ Viale degli Spazi Topologici\\II Universit\`a di Roma (Tor Vergata)\\I-00133 ROME ITALY}
\urladdr{http://www.mat.uniroma2.it/\textasciitilde lipparin}
\email{lipparin@axp.mat.uniroma2.it}

\keywords{Linearly ordered, generalized  ordered, initially $\lambda$-compact,
$D$-(pseudo)-compact, $\lambda$-bounded   topological space; product; ultrafilter} 

\subjclass[2010]{54F05, 54A20, 54D20, 54B10}

\begin{abstract}
We show that a linearly ordered topological space
 is initially $\lambda$-compact if and only if it
 is $\lambda$-bounded, that is, every set of cardinality
$\leq \lambda$ has compact closure. As a consequence,
every product of  initially $\lambda$-compact
linearly ordered topological spaces is initially $\lambda$-compact.
\end{abstract} 
 
\maketitle

A topological space is \emph{initially $\lambda$-compact} if
every open cover by at most $\lambda$ sets has a finite subcover. 
According to a celebrated theorem,  Stephenson  and  Vaughan \cite[Theorem 1.1]{SV}, if $\lambda$ is a strong limit singular cardinal, then 
every product of initially $\lambda$-compact topological spaces 
is still  initially $\lambda$-compact. We prove a much stronger result
for products whose factors are linearly ordered topological spaces: for such spaces, the above theorem holds for \emph{every} infinite cardinal $\lambda$.
In fact, our proof works for \emph{generalized  ordered spaces},
for short, \emph{GO spaces}, that is,
  Hausdorff  spaces  equipped 
with  a  linear  order  and  with  a  base  of  order-convex  sets.
See, e.~g.,  Bennet and Lutzer \cite{BL} for more information about 
GO spaces.

We shall prove a chain
of equivalences which involve
several notions, such as $\lambda$-boundedness, 
$D$-compactness, $D$-pseudocompactness,
conditions asking for the existence of ``complete 
accumulation points'' of sequences of open sets,
and a condition simply asking that strictly increasing or 
decreasing sequences indexed by a regular cardinal converge.
To state our theorem in such a full generality we need to recall
some definitions.
 If $D$ is an ultrafilter over some set $I$, then a topological space $X$ is said to be \emph{$D$-compact}
  if every $I$-indexed sequence $(x_i) _{i \in I} $ of elements of $X$ 
 \emph{$D$-converges} to some $x \in X$, that is,
 $\{ i \in I \mid x_i \in U\} \in D$,
for every open neighborhood $U$ of $x$.
The space $X$ is said to be \emph{$D$-pseudocompact}
  if every $I$-indexed sequence $(O_i) _{i \in I} $ of nonempty open subsets of $X$ 
has some  \emph{$D$-limit point} in $X$, that is, there is
 some $x \in X$ such that 
 $\{ i \in I \mid  U \cap O_i \not= \emptyset \} \in D$,
for every open neighborhood $U$ of $x$.
If $\beta$ is a limit ordinal, we say that a sequence $(x_ \gamma ) _{ \gamma < \beta } $ of elements of a topological space 
\emph{converges} to some point $x$   if,
for every neighborhood $U$ of $x$,
there is $\gamma < \beta $ such that 
$x _{ \gamma '} \in U $, for every $\gamma' > \gamma $.

\begin{theorem*} \labbel{thm}
For every infinite cardinal $\lambda$, and every GO
space $X$, the following conditions are equivalent.
 \begin{enumerate}
   \item 
 $X$ is initially $\lambda$-compact.
   \item 
 $X$ is \emph{weakly initially $\lambda$-compact}, that is, every open cover 
of $X$ by at most $ \lambda$ sets has a finite subcollection with dense union. 
\item
For every infinite (equivalently, every infinite regular) cardinal $ \nu \leq \lambda $,
and every family  $(O_ \gamma ) _{\gamma < \nu}$ of $\nu$ open 
 nonempty sets of $X$,   
 there is $x \in X$ such that 
$|\{\gamma < \nu \mid O_ \gamma  \cap U \not= \emptyset \}|= \nu$, 
for every neighborhood $U$ of $x$.   

In the  above condition we can equivalently ask either that the $O_ \gamma $'s
are pairwise disjoint, or  that $O_ \gamma \subseteq O _{ \gamma '} $,
for $\gamma> \gamma '$.  
\item
For every infinite regular cardinal $ \nu \leq \lambda $,
and every strictly increasing (resp., strictly decreasing) $\nu$-indexed sequence
of elements of $X$, the sequence has a supremum (resp., an infimum) to which it
converges.
\item 
$X$ is $D$-compact, for every ultrafilter $D$ over any set of cardinality 
$ \leq\lambda$.
\item 
$X$ is $D$-pseudocompact, for every ultrafilter $D$ over any set of cardinality 
$ \leq\lambda$.
\item
$X$ is \emph{$\lambda$-bounded}, that is, every subset of cardinality
$\leq \lambda$ has compact closure.
  \end{enumerate} 
 \end{theorem*}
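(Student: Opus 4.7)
My plan is to prove the cyclic chain $(7) \Rightarrow (1) \Rightarrow (2) \Rightarrow (3) \Rightarrow (4) \Rightarrow (7)$, grafting in the side chain $(1) \Rightarrow (5) \Rightarrow (6) \Rightarrow (3)$ to incorporate the ultrafilter conditions. Every implication except $(3) \Rightarrow (4)$ and $(4) \Rightarrow (7)$ is valid in arbitrary topological spaces; only the last two will use the GO hypothesis.

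The general-topology implications are standard. $(7) \Rightarrow (1) \Rightarrow (2)$ are immediate from the definitions. For $(1) \Rightarrow (5)$: if $(x_i)_{i \in I}$ has no $D$-limit, each $x \in X$ has a neighborhood $U_x$ with $\{i : x_i \in U_x\} \notin D$; extracting a finite subcover via initial $\lambda$-compactness contradicts $D$ being a filter. For $(5) \Rightarrow (6)$: pick $x_i \in O_i$ and $D$-converge. For $(6) \Rightarrow (3)$: extend the filter of cobounded subsets of the regular cardinal $\nu$ to an ultrafilter $D$ on $\nu$; any $D$-limit point of $(O_\gamma)_{\gamma<\nu}$ is a complete accumulation point, since every set in $D$ is cobounded in $\nu$ and hence of cardinality $\nu$ by regularity. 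For $(2) \Rightarrow (3)$: if no complete accumulation point existed, cover $X$ by neighborhoods $U_x$ with $|\{\gamma : O_\gamma \cap U_x \neq \emptyset\}| < \nu$; a finite subfamily with dense union, together with regularity of $\nu$, yields a pigeonhole contradiction. The equivalence of the three formulations inside (3) is routine by passing to subfamilies.

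The GO-specific implications are the heart of the proof. For $(3) \Rightarrow (4)$, I would argue contrapositively: given a strictly increasing $(x_\gamma)_{\gamma<\nu}$ for which (4) fails, construct a $\nu$-indexed family of nonempty open sets with no complete accumulation point. The cases are: (a) the sequence is unbounded above in $X$, handled by the decreasing family of upper rays $O_\gamma = (x_\gamma, \rightarrow)$; (b) a supremum $s$ exists but the sequence does not converge to $s$, which by convexity of the basis forces $s$ to have a basic open neighborhood contained in $[s,\rightarrow)$, and then $O_\gamma = (x_\gamma, s)$ works (points below $s$ are cut off by rays $(\leftarrow, x_{\gamma_0})$, the point $s$ by its right-only neighborhood, points above $s$ by $(s,\rightarrow)$); (c) the sequence is bounded but the set of upper bounds has no minimum, handled by a family of convex open neighborhoods of the $x_\gamma$'s chosen via Hausdorffness to avoid the other terms of the sequence, together with a case analysis verifying that any upper bound of the sequence admits a neighborhood cutting off all but a small portion of the family. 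For $(4) \Rightarrow (7)$: given $S \subseteq X$ with $|S| \leq \lambda$, apply (4) to strictly monotone cofinal subsequences of regular length at most $\lambda$ to show $S$ is bounded and $\bar S$ is Dedekind-complete as a suborder of $X$; compactness of $\bar S$ then follows from the standard characterization of compact GO spaces, with (4) additionally ruling out the one-sidedly isolated suprema or infima that would otherwise disrupt compactness in the refined topology.

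The main obstacle will be case (c) of $(3) \Rightarrow (4)$: the set of upper bounds of $(x_\gamma)$ may have cofinality from below exceeding $\lambda$, so no $\lambda$-indexed family built purely from these upper bounds can directly witness a violation of (3). The resolution requires a careful construction of convex open neighborhoods of the $x_\gamma$'s that are sufficiently disjoint, using Hausdorffness at successor stages and ad hoc choices at limits and at one-sidedly isolated points, and then verifying that each potential upper-bound accumulation point is cut off by an appropriate neighborhood to the left.
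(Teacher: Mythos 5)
Your proof of (1) $\Rightarrow$ (5) does not work as written, and this is the one genuine gap. If $(x_i)_{i\in I}$ has no $D$-limit, the neighborhoods $U_x$ with $\{i: x_i\in U_x\}\notin D$ form an open cover indexed by the points of $X$, which in general has far more than $\lambda$ members, so initial $\lambda$-compactness gives no finite subcover. This cannot be repaired inside general topology: for $\lambda=\omega$ the implication would say that every countably compact space is $D$-compact for every ultrafilter $D$ on $\omega$, and since $D$-compactness (for a fixed $D$) is preserved by arbitrary products, countable compactness would be productive, contradicting the classical examples of two countably compact spaces with non-countably-compact product. So your blanket claim that everything except (3) $\Rightarrow$ (4) and (4) $\Rightarrow$ (7) holds in arbitrary spaces fails exactly here; obtaining (5) is the heart of the theorem, and the paper gets it from the GO structure via (4) $\Rightarrow$ (5), splitting the sequence by the ultrafilter into $L=\{x_j: \{i: x_j<x_i\}\in D\}$ and its mirror image and applying (4) to a cofinal sequence in the side that $D$ concentrates on. Within your architecture the cheap repair is to reroute the side chain as (7) $\Rightarrow$ (5) $\Rightarrow$ (6) $\Rightarrow$ (3): a set of $\leq\lambda$ points has compact closure, and compact spaces are $D$-compact for every $D$. (Similarly, (7) $\Rightarrow$ (1) is standard but not literally ``immediate''; it needs, e.g., the finite-intersection-property argument choosing one point per finite subfamily.)

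Apart from this, your route is genuinely different from the paper's, which proves (1) $\Rightarrow$ (2) $\Rightarrow$ (3)$_{\mathrm{reg}}$ $\Rightarrow$ (4) $\Rightarrow$ (5) $\Rightarrow$ (1) and quotes Saks for (5) $\Leftrightarrow$ (7); your direct (4) $\Rightarrow$ (7) would make the paper self-contained, but note it needs, besides the characterization of compact GO spaces, the currently implicit lemma that every subset of $\overline{S}$ has cofinality and coinitiality $\leq\lambda$ (pairwise disjoint nonempty open intervals around points of $\overline{S}$ each meet $S$, so there are at most $\lambda$ of them); with that lemma your sketch goes through. Finally, your ``main obstacle'' in case (c) of (3) $\Rightarrow$ (4) is illusory: take $O_\gamma=(x_\gamma,x_{\gamma+2})$ as elsewhere; a point that is not an upper bound is cut off by $(\leftarrow,x_{\gamma_0})$, while an upper bound $x$ is cut off by $(y,\rightarrow)$ for any smaller upper bound $y$ (which exists because the set of upper bounds has no minimum), since every $O_\gamma$ lies below $x_{\gamma+2}\leq y$. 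No Hausdorff-chosen neighborhoods of the $x_\gamma$'s are needed (and at limit stages such neighborhoods cannot in general avoid the earlier terms); this simple argument is exactly the paper's direct proof of (3)$_{\mathrm{reg}}$ $\Rightarrow$ (4) read contrapositively.
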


\begin{proof}
We shall first prove the following chain of equivalences:
(1) $\Rightarrow $  (2) $\Rightarrow $  (3)$_{\rm reg}$ $\Rightarrow $ 
(4) $\Rightarrow $  (5) $\Rightarrow $  (1), where by
  (3)$_{\rm reg}$ we denote the condition (3) restricted to 
regular $\nu$'s.
By the way, notice that the implication (1) $\Rightarrow $  (4) is trivial, hence the reader interested
only in the proof of the equivalence of (1), (4), (5) and (7) could skip the 
next three passages.
 
(1) $\Rightarrow $  (2) is trivial. 

(2) $\Rightarrow $  (3) is known, and true
for every topological space.
First, we prove here (2) $\Rightarrow $  (3)$_{\rm reg}$.
 Suppose that (2) holds, and that the conclusion of   (3)$_{\rm reg}$
fails. Then, since $\nu$ is regular,
 for every $x \in X$ we can choose an open  neighborhood  $U_x$ of $x$ and some
$ \delta _x < \nu$ such that   
$O_ \gamma  \cap U_x = \emptyset$, for every $ \gamma > \delta_x$.    
For every $ \delta < \nu$, let $V_ \delta =  \bigcup _{\delta_x = \delta}  U_x$.
Thus $(V_ \delta ) _{ \delta < \nu}$ is  an open cover of $X$ by $\leq \lambda$ sets,
hence, by (2), it has a finite subcollection with dense union, say,
$ V _{ \delta_1}$, \dots,  $ V _{ \delta_n}$. 
If $ \gamma = \sup \{  \delta_1, \dots, \delta_n \} + 1$, 
then $O_ \gamma  \cap (V _{ \delta_1} \cup \cdots \cup  V _{ \delta_n})=
\emptyset$, a contradiction, since $O_ \gamma$ is nonempty. 

 (3)$_{\rm reg}$ $\Rightarrow $  (4) is easy. Suppose that
$\nu$ is a regular cardinal, and
that $(x_ \gamma) _{ \gamma < \nu}$ is, say, a strictly increasing sequence.
For $\gamma < \nu $, define $O_ \gamma = (x_ \gamma, x _{\gamma +2} ) 
= \{ x \in X \mid x_ \gamma < x < x _{ \gamma +2}  \} $.
The $O_ \gamma$'s are open  and nonempty, since $x _{\gamma +1} \in O_ \gamma$.
It is immediate to see that the $x$ given by  (3)$_{\rm reg}$ is a supremum of
   $(x_ \gamma) _{ \gamma < \nu}$ to which the sequence converges. 

If the open sets in  (3)$_{\rm reg}$ are required to be disjoint,
simply take only the ``even'' above sets, namely,
for $\gamma= \alpha + n$, with $\alpha=0$ or $\alpha$  limit,
let   $O_ \gamma = (x _{\alpha + 2n} , x _{ \alpha  +2n+2} ) $.

If the sequence of open sets in  (3)$_{\rm reg}$ is required to be $ \subseteq $-decreasing, take
 $O _ \gamma = \bigcup _{ \gamma '> \gamma } (x_ \gamma, x _{\gamma'} )$.
Thus the proof of  (3)$_{\rm reg}$ $\Rightarrow $  (4) is complete in each case.

Next, we  concentrate on the proof of (4) $\Rightarrow $  (5).
Suppose that (4) holds, and that
 $D$ is an ultrafilter  over some set $I$ of cardinality $\leq \lambda $.
Let $(x_i) _{i \in I} $  be an $I$-indexed sequence of elements of $X$: we have to show that
$(x_i) _{i \in I} $   $D$-converges in $X$. 
Without loss of generality, we can suppose that,
for every $x \in X$, 
$\{ i \in I \mid x_i= x  \} \not\in D$,
since, otherwise,
clearly $(x_i) _{i \in I} $   $D$-converges to $x$, and we are done.
Since $D$ is an ultrafilter, and $X$ is linearly ordered,
then, for each $j \in I$, either 
$A_j = \{ i \in I \mid x_j<x_i\} \in D$,
or  
$B_j=\{ i \in I \mid x_i < x_j\} \in D$.
Let 
$L = \{ x_j \mid A_j \in D\}$ 
and
$R = \{ x_j \mid B_j \in D\}$;
thus, in particular,
$L \cup R = \{ x_i \mid i \in I \}$,
hence, again since $D$ is an ultrafilter,
then either 
$\{ i \in I \mid x_i \in L \} \in D$,
or 
$\{ i \in I \mid x_i \in R \} \in D$.
Suppose that $\{ i \in I \mid x_i \in L \} \in D$;
the other case is treated in a symmetrical way.
Notice that $L$ cannot have a maximum:
if  $x_j \in L$,
then, by definition, 
 $A_j = \{ i \in I \mid x_j<x_i\} \in D$,
and if $x_j$ is a maximum for $L$,
this contradicts $\{ i \in I \mid x_i \in L \} \in D$.
Hence $L$
has infinite cofinality  $ \leq \lambda $,
since $|L| \leq |I| \leq \lambda $.
Say, $\nu$ is the cofinality of $L$;
thus, we can choose a strictly increasing sequence 
$(x _{i_ \alpha } ) _{ \alpha < \nu} $ 
cofinal in $L$. By (4), this sequence has 
a supremum to which it converges,
call it $\ell$;
in particular, $\ell$ 
is also the supremum of $L$, and
every neighborhood
of $\ell$ contains 
a convex interval of the form
$( x _{i_ \alpha },\ell]$,
for some $\alpha < \nu$.
But this soon implies that  
$(x_i) _{i \in I} $   $D$-converges to $\ell$;
indeed, for every $\alpha < \nu$,
$\{ i \in I \mid x_i \in ( x _{i_ \alpha },\ell] \} 
\subseteq A _{i_ \alpha } \cap \{ i \in I \mid x_i \in L \} \in D$,
since $D$ is a filter.
The proof of the implication (4) $\Rightarrow $  (5) is thus complete.

(5) $\Rightarrow $  (1) is nowadays a well-known standard argument, and, in fact, 
the implication holds for every 
topological space. See, e.~g.,  \cite[implication (7) in Diagram 3.6]{St}. 

We have proved the equivalence of (1), (2),  (3)$_{\rm reg}$, (4) and (5). Now
notice that (5) $\Rightarrow $ (6) $\Rightarrow $  (3) $\Rightarrow $  
 (3)$_{\rm reg}$
 are trivial: to show (6) $\Rightarrow $  (3), just consider, for every 
$\nu \leq \lambda $, some uniform ultrafilter 
over $\nu$.

Finally, the equivalence of (5) and (7) is well-known, and holds for every 
Hausdorff regular space, \cite[Theorems 5.3 and 5.4]{Sa}
(recall that it can be proved that every GO space is regular). 
\end{proof}

\begin{corollary*} \labbel{cor}
Suppose that $X$ is a product of topological spaces, and that all
 factors but at most one are GO spaces.
Then $X$ is  initially $\lambda$-compact
if and only if each factor is initially $\lambda$-compact.
 \end{corollary*}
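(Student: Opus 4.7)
My plan is to reduce the corollary to the $D$-compactness formulation (5) of the theorem, since $D$-compactness for a fixed ultrafilter $D$ is preserved under arbitrary products. Write $X = Y \times Z$, where $Z$ is the product of the GO factors and $Y$ is the possibly absent non-GO factor (replace $Y$ by a singleton if it is absent). By the theorem, each GO factor $X_j$, being initially $\lambda$-compact, is $D$-compact for every ultrafilter $D$ on any set of cardinality $\leq \lambda$; productivity of $D$-compactness then makes $Z$ itself $D$-compact for every such $D$.

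One direction is immediate: each factor is a continuous image of $X$ under a projection, and initial $\lambda$-compactness is preserved under continuous images, so if $X$ is initially $\lambda$-compact then so is each factor.

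For the converse, I would verify the complete-accumulation reformulation of initial $\lambda$-compactness for $Y \times Z$: for every regular infinite $\kappa \leq \lambda$ and every $\kappa$-sequence $(y_\alpha, z_\alpha)_{\alpha < \kappa}$ in $X$, I must produce a complete accumulation point. Initial $\lambda$-compactness of $Y$ supplies some $y \in Y$ such that $S_U := \{\alpha < \kappa \mid y_\alpha \in U\}$ has cardinality $\kappa$ for every open neighborhood $U$ of $y$ (splitting on whether the sequence takes fewer or more than $\kappa$ distinct values). The sets $S_U$ form a filter base on $\kappa$ all of whose members have size $\kappa$; adjoining the cobounded filter $\{T \subseteq \kappa \mid |\kappa \setminus T| < \kappa\}$ preserves the finite intersection property (trimming a size-$\kappa$ set by fewer than $\kappa$ elements leaves size $\kappa$), so extension yields a $\kappa$-uniform ultrafilter $D$ on $\kappa$. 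Since $|\kappa| \leq \lambda$, $Z$ is $D$-compact, so $(z_\alpha)$ $D$-converges to some $z \in Z$. For any basic open neighborhood $U \times V$ of $(y,z)$, the set $\{\alpha \mid (y_\alpha, z_\alpha) \in U \times V\} = S_U \cap \{\alpha \mid z_\alpha \in V\}$ belongs to $D$ and hence has cardinality $\kappa$ by $\kappa$-uniformity; thus $(y, z)$ is the required complete accumulation point.

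The main obstacle is the asymmetry between the two factors: the GO product $Z$ furnishes $D$-compactness for every $D$ on a set of size $\leq \lambda$, but $Y$ is only initially $\lambda$-compact and thus supplies complete accumulation, not $D$-convergence, of the first coordinate. The ultrafilter-extension trick above bridges this gap by designing $D$ so that it already encodes the accumulation behavior of $(y_\alpha)$ at $y$, after which the stronger $D$-compactness of $Z$ can be invoked to select a matching $z$. A subsidiary point, which I would cite rather than reprove, is the standard equivalence between initial $\lambda$-compactness and the complete-accumulation property for $\kappa$-sequences with $\kappa \leq \lambda$ regular, a routine Alexandroff--Urysohn-style result.
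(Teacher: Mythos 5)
Your argument is correct, but it follows a different route from the paper. The paper passes through the equivalence (1) $\Leftrightarrow$ (7) of the Theorem: the GO factors are $\lambda$-bounded, a product of regular $\lambda$-bounded spaces is $\lambda$-bounded (cited from Stephenson), and the product of a $\lambda$-bounded space with an initially $\lambda$-compact space is initially $\lambda$-compact (cited from Saks); the corollary then follows by grouping the GO factors and multiplying by the remaining factor. You instead use the equivalence (1) $\Leftrightarrow$ (5), the elementary productivity of $D$-compactness for a fixed ultrafilter $D$ to handle the product $Z$ of the GO factors, and then, rather than citing Saks' mixed-product theorem, you reprove it directly by the Ginsburg--Saks-style device: extract from the complete-accumulation behavior of $(y_\alpha)$ at $y$ a $\kappa$-uniform ultrafilter $D$ on $\kappa$ containing every $S_U$ together with the co-$({<}\kappa)$ sets (your check of the finite intersection property, and the splitting into the cases of fewer than $\kappa$ or exactly $\kappa$ distinct values, are exactly what is needed), and then let the $D$-compactness of $Z$ choose a matching second coordinate. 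What your route buys is self-containedness on the product step and independence from separation axioms: you never need regularity of GO spaces, which the paper's route uses both for the (5) $\Leftrightarrow$ (7) equivalence and for Stephenson's product theorem on $\lambda$-bounded regular spaces. The price is the one external ingredient you cite, the classical Alexandroff--Urysohn-type equivalence between initial $\lambda$-compactness and the existence of complete accumulation points for $\kappa$-sequences, $\kappa \leq \lambda$ regular, which indeed holds for arbitrary spaces, so the citation is legitimate; the paper's proof, by contrast, is shorter because it delegates both product theorems to the literature.
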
 

\begin{proof}
One implication is trivial.
For the other direction, by the equivalence of (1) and (7) in the Theorem,
all but at most one factor are $\lambda$-bounded.
It is well-known that a product of regular $\lambda$-bounded spaces
is still $\lambda$-bounded \cite[Theorem 5.7 and implications (1), (1$'$) in Diagram 3.6]{St},
and that a product of a $\lambda$-bounded space with an 
  initially $\lambda$-compact space is  initially $\lambda$-compact
\cite[Theorem 5.2 and implications (1), (2) in Diagram 3.6]{Sa}. Hence the corollary follows by first grouping together
the GO spaces, and then, in case,  multiplying their product
with the possibly non GO factor.
 \end{proof}  

The particular cases of the above
Theorem  and Corollary
when $\lambda= \omega $ appeared in 
 Sanchis and  Tamariz-Mascar\'ua \cite [Section 2]{ST}, 
or are immediate consequences of the statements 
there.

By slightly more elaborate arguments, the proof of
the implication (4) $\Rightarrow $  (5) in the above theorem gives the following
proposition. Recall that an ultrafilter $D$ over $I$ is
$\nu$-decomposable if there is some function 
$f: I \to \nu$ such that $f ^{-1}(A) \not\in D $,
for every $A \subset \nu$ with $|A| < \nu$.

\begin{proposition*} \labbel{prop} 
Suppose that $\lambda$ is an infinite cardinal, $X$ is a GO space,
and $D$ is an ultrafilter over some set of cardinality $ \leq\lambda$.
Suppose further that, for every regular cardinal $\nu \leq \lambda $, at least one of the following 
conditions are satisfied: 
 \begin{enumerate}
 \item
For every strictly increasing (or strictly decreasing) $\nu$-indexed sequence
of elements of $X$, the sequence has a supremum (or an infimum) to which it
converges, or
\item 
$D$ is not $\nu$-decomposable.
  \end{enumerate} 
Then $X$ is $D$-compact.

In particular, $X$ is $D$-compact if 
$D$ is $\mu $-complete and $X$ is $[\mu, \lambda ]$-compact. 
\end{proposition*}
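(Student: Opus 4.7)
The plan is to adapt the argument for the implication (4) $\Rightarrow$ (5) in the main Theorem, branching on which of conditions (1), (2) of the proposition holds at the particular cofinality that appears. Assume toward a contradiction that $(x_i)_{i\in I}$ is an $I$-indexed sequence with no $D$-limit in $X$. I would copy the main-theorem argument verbatim up to the following state: $\{i\mid x_i=x\}\notin D$ for every $x\in X$; the sets $A_j,B_j,L,R$ are defined as in the main proof; by symmetry one may assume $\{i\mid x_i\in L\}\in D$; and $L$ has no maximum, so its cofinality $\nu\leq|L|\leq\lambda$ is an infinite regular cardinal. Fix a strictly increasing sequence $(x_{i_\alpha})_{\alpha<\nu}$ cofinal in $L$.

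Now I would invoke the proposition's hypothesis at this particular $\nu$. If (1) holds for $\nu$, the cofinal sequence has a supremum $\ell$ to which it converges, and the final computation from the main Theorem's (4) $\Rightarrow$ (5) shows that $(x_i)$ $D$-converges to $\ell$, contradicting the choice of $(x_i)$. If (2) holds, i.e.\ $D$ is not $\nu$-decomposable, define $f:I\to\nu$ by $f(i)=\min\{\alpha<\nu\mid x_i\leq x_{i_\alpha}\}$ when $x_i\in L$ (well-defined by cofinality) and $f(i)=0$ otherwise. Non-$\nu$-decomposability yields $A\subset\nu$ with $|A|<\nu$ and $f^{-1}(A)\in D$; by regularity of $\nu$ there is $\beta<\nu$ with $A\subseteq[0,\beta)$, hence $f^{-1}([0,\beta))\in D$. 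For any $i$ in the $D$-set $f^{-1}([0,\beta))\cap\{i\mid x_i\in L\}$ one has $x_i\leq x_{i_{f(i)}}<x_{i_\beta}$ by strict monotonicity, so $\{i\mid x_i<x_{i_\beta}\}\in D$. But $x_{i_\beta}\in L$ forces $A_{i_\beta}=\{i\mid x_{i_\beta}<x_i\}\in D$, and these two $D$-sets are disjoint---contradiction. Either branch refutes the bad sequence, so $X$ is $D$-compact.

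For the ``in particular'' clause I would verify, for each regular $\nu\leq\lambda$, one of (1), (2) of the proposition. If $\nu<\mu$, $\mu$-completeness of $D$ passes to $\nu$-completeness, which is incompatible with $\nu$-decomposability (applying $\nu$-completeness to the complements of a witnessing partition would give $\emptyset\in D$), so (2) holds. If $\nu\geq\mu$, I would re-run the main Theorem's proof of (2) $\Rightarrow$ (3)$_{\rm reg}$ with ``weakly initially $\lambda$-compact'' replaced by ``$[\mu,\lambda]$-compact'': the cover $(V_\delta)_{\delta<\nu}$ built there has size $\nu\in[\mu,\lambda]$, a subcover of cardinality $<\mu\leq\nu$ is bounded in $\nu$ by regularity, and the same contradiction with some $O_\gamma$ beyond the bound appears; this delivers (3)$_{\rm reg}$ at $\nu$, hence (1) of the proposition via the main Theorem's (3)$_{\rm reg}$ $\Rightarrow$ (4). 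The main obstacle I anticipate is the case-(2) contradiction above: one has to pin down the ordinal bound $\beta<\nu$ (this is where regularity of $\nu$ is used) and to remember that the intersection with $\{i\mid x_i\in L\}$ is essential, since setting $f\equiv0$ off $L$ is only a bookkeeping convention.
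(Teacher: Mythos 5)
Your proposal is correct, and it is exactly the ``slightly more elaborate'' adaptation of the Theorem's (4) $\Rightarrow$ (5) argument that the paper alludes to without giving details: in the $L$-branch you either invoke condition (1) at $\nu=\cf(L)$ to get the $D$-limit $\ell$, or use non-$\nu$-decomposability of $D$ to find $\beta<\nu$ with $\{i\mid x_i< x_{i_\beta}\}\in D$, contradicting $A_{i_\beta}\in D$; and the ``in particular'' clause is rightly reduced to the two cases $\nu<\mu$ (indecomposability from completeness) and $\nu\geq\mu$ (rerunning the cover argument with $[\mu,\lambda]$-compactness to get (3)$_{\rm reg}$, hence (4), at $\nu$). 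The only cosmetic slip is terminological: for $\nu<\mu$ you say $\mu$-completeness ``passes to $\nu$-completeness'', whereas what you actually use (and what $\mu$-completeness directly provides, since $\nu<\mu$) is closure under intersections of $\nu$ many members of $D$, i.e.\ $\nu^{+}$-completeness.
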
 

Recall that an ultrafilter $D$ is \emph{$\mu $-complete} if every intersection
of $<\mu $ members of $D$ is still in $D$.
A topological space is \emph{$[\mu, \lambda ]$-compact}
if every open cover by at most $\lambda$ sets has a subcover
by $<\mu $ sets.
Details shall be presented elsewhere.

\bibliographystyle{plain}

\end{document}